\documentclass{amsart}
\usepackage{amssymb,amsmath}
\usepackage{graphicx,ytableau}
\usepackage[enableskew]{youngtab}
\usepackage{young}

\newtheorem{theorem}{Theorem}
\newtheorem{corollary}{Corollary}
\newtheorem{lemma}{Lemma}

\newtheorem{conjecture}{Conjecture}

\begin{document}

\title{Partitions into parts simultaneously regular, distinct, and/or flat}
\author{William J. Keith}
\keywords{partitions}
\subjclass[2010]{05A17, 11P83}
\maketitle

\begin{abstract}
We explore partitions that lie in the intersection of several sets of classical interest: partitions with parts indivisible by $m$, appearing fewer than $m$ times, or differing by less than $m$.  We find results on their behavior and generating functions: more results for those simultaneously regular and distinct, fewest for those distinct and flat.  We offer some conjectures in the area.
\end{abstract}

\section{Introduction}

A partition of $n$ is a nonincreasing sequence of positive integers which sums to $n$, i.e. $\lambda \vdash n$ if $\lambda_1 \geq \lambda_2 \geq \dots \gets \lambda_k$ and $\lambda_1 + \dots + \lambda_k = n$. Their study was initiated by Euler, who proved the usual first result seen by a student of the area, namely

\begin{theorem} The number of partitions of $n$ in which all parts are odd equals the number of partitions of $n$ in which parts are distinct.
\end{theorem}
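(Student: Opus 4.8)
The plan is to compute the two relevant generating functions and show they coincide as formal power series in $q$. Write $p_{\mathrm{odd}}(n)$ for the number of partitions of $n$ into odd parts and $p_{\mathrm{dist}}(n)$ for the number into distinct parts. A partition into odd parts is determined by choosing, for each odd $j \geq 1$, a multiplicity $m_j \geq 0$, so
\[
\sum_{n \geq 0} p_{\mathrm{odd}}(n)\, q^n = \prod_{j \geq 1} \frac{1}{1 - q^{2j-1}},
\]
while a partition into distinct parts is determined by choosing, for each $k \geq 1$, whether $k$ appears, so
\[
\sum_{n \geq 0} p_{\mathrm{dist}}(n)\, q^n = \prod_{k \geq 1} \bigl(1 + q^k\bigr).
\]

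The crux is the identity $1 + q^k = \dfrac{1 - q^{2k}}{1 - q^k}$, valid in the ring of formal power series. Substituting it into the product for distinct parts gives $\prod_{k \geq 1} \dfrac{1 - q^{2k}}{1 - q^k}$; the numerator factors $1 - q^{2k}$ cancel precisely the denominator factors $1 - q^j$ with $j$ even, leaving $\prod_{j \text{ odd}} \dfrac{1}{1 - q^j}$, which is the generating function for partitions into odd parts. Comparing coefficients of $q^n$ then yields $p_{\mathrm{odd}}(n) = p_{\mathrm{dist}}(n)$ for all $n$.

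The one point needing care is the justification of the infinite-product cancellation. I would handle this by truncating: for a fixed $n$, only the factors with $k \leq n$ affect the coefficient of $q^n$, so it suffices to verify the cancellation among the partial products up to degree $n$ (or, equivalently, to note that everything converges in the $q$-adic topology on $\mathbb{Z}[[q]]$). This is routine but should be stated explicitly so that the manipulation of infinitely many factors is not left dangling.

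As an alternative I would sketch Glaisher's bijection, which is more explicit though more laborious to verify: send a partition into odd parts, in which the odd part $d$ occurs $m_d$ times, to the partition whose parts are the numbers $2^i d$ for each $i$ with a $1$ in the binary expansion of $m_d$. Distinctness of the image follows from the uniqueness of writing a positive integer as a power of $2$ times an odd number, and the inverse groups the parts of a distinct partition according to their odd part and reassembles the multiplicities from the corresponding powers of $2$. I expect the main obstacle in this route to be the bookkeeping needed to confirm that the two maps are mutually inverse and sum-preserving.
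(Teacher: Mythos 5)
Your generating-function argument is correct and complete: the identity $1+q^k = \frac{1-q^{2k}}{1-q^k}$, the telescoping cancellation of the even-index factors, and your explicit remark that only factors with $k \leq n$ affect the coefficient of $q^n$ together constitute Euler's original proof, and the formal-power-series issue is handled properly. However, this is a genuinely different route from the one the paper takes: the paper proves the theorem combinatorially, citing Sylvester's explicit mapping and then Glaisher's generalization to arbitrary modulus $m$ (writing the multiplicity of a part $j m^k$ in base $m$ and redistributing the digits), of which your ``alternative sketch'' via binary expansions of multiplicities is exactly the $m=2$ case. The trade-off is the usual one: the generating-function proof is shorter and essentially self-verifying, but it is specific to the statement at hand and does not by itself produce the explicit involution $\phi$ whose fixed points (the simultaneously $m$-regular and $m$-distinct partitions) are the central objects of the rest of the paper; the bijective proof is more laborious to check but yields both the general-modulus theorem and the structural map the paper builds on. If you intend your proof to serve the paper's purposes, the bijection should be promoted from a sketch to the main argument.
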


The theorem was proved by a hands-on combinatorial mapping found by J. J. Sylvester, and then generalized to all moduli by a more general mapping given by his student Glaisher:

\begin{theorem} The number of partitions of $n$ in which no part is divisible by $m$ equals the number of partitions of $n$ in which parts appear fewer than $m$ times.
\end{theorem}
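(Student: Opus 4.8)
The plan is to give Glaisher's bijection directly between the two families of partitions of $n$, though I note in passing that equality of the counts is also immediate from generating functions: partitions with no part divisible by $m$ are counted by $\prod_{m \nmid k}(1-q^k)^{-1} = \prod_{k \geq 1}(1-q^{mk})/(1-q^k)$, while allowing each part $k$ to appear at most $m-1$ times contributes a factor $1 + q^k + \cdots + q^{(m-1)k} = (1-q^{mk})/(1-q^k)$ for each $k$, yielding the same product. A bijection is more informative, and is what Glaisher supplied.

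The linchpin is the observation that every positive integer $N$ has a unique representation $N = k m^i$ with $m \nmid k$ and $i \geq 0$. Given a partition $\lambda$ of $n$ in which no part is divisible by $m$, with the part $k$ occurring $a_k$ times, I would expand each multiplicity in base $m$, say $a_k = \sum_{i \geq 0} c_{k,i}\, m^i$ with $0 \leq c_{k,i} < m$, and define the image partition to contain the part $k m^i$ with multiplicity $c_{k,i}$, for every $k$ not divisible by $m$ and every $i \geq 0$. Since distinct pairs $(k,i)$ with $m \nmid k$ give distinct integers $km^i$, the multiplicities of the image are exactly the $c_{k,i}$, each strictly less than $m$; and size is preserved because $a_k \cdot k = \sum_i c_{k,i} m^i k = \sum_i c_{k,i}(km^i)$, so summing over $k$ recovers $n$.

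For the inverse, given a partition $\mu$ of $n$ in which every part occurs fewer than $m$ times, I would group parts by their "$m$-free core": for each $k$ with $m \nmid k$, let $c_{k,i}$ be the multiplicity of $km^i$ in $\mu$, and assign to $k$ the multiplicity $\sum_{i\geq 0} c_{k,i} m^i$. One checks this lands in the first family, is size-preserving by the same identity, and is a two-sided inverse of the forward map by uniqueness of base-$m$ digits.

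The main obstacle is purely the bookkeeping: confirming the map sends partitions to partitions (positive, nonincreasing, correct size), that the image genuinely has all multiplicities below $m$, and that the two maps compose to the identity in either order. None of this is deep, but the uniqueness of the representation $N = km^i$ with $m \nmid k$ is what makes every step consistent, so I would isolate and prove that fact first and let the rest follow from it.
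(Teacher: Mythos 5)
Your proposal is correct and is essentially the same argument the paper relies on: the paper proves this theorem by exactly Glaisher's bijection, expanding the multiplicity of each part $j$ not divisible by $m$ in base $m$ and redistributing the digits onto the parts $jm^i$, which is precisely your forward map (your map is the restriction to $m$-regular partitions of the general involution $\phi$ the paper describes). Your added generating-function aside and the explicit isolation of the unique factorization $N = km^i$ with $m \nmid k$ are sound but do not change the route.
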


The map of Glaisher's proof can be extended to a general mapping on all partitions: if, given $j$ not divisible by $m$, the part $j m^k$ appears $\sum_{\ell=0}^\infty a_{k,\ell} m^\ell$ times in $\lambda$, written in the base $m$ expansion, then in $\phi(\lambda)$ write the part $j m^\ell$ appearing $a_{k,\ell} m^k$ times for each nonzero $a_{k,\ell}$.  If no part in $\lambda$ is divisible by $m$ ($a_{k,\ell} = 0$ for $k>0$), then in $\phi(\lambda)$ no part will appear $m$ or more times, and vice versa.

The fixed points of the map are precisely those partitions in which parts are not divisible by $m$ (called $m$-\emph{regular} partitions) and in which no part appears $m$ or more times (a partition with the latter property we will call $m$-\emph{distinct}).

The fixed points of an interesting map ought to be of interest, but a search of the literature suggests that little work has been done with these partitions, with the strong exception of the $m=2$ case, partitions into distinct odd parts.  Denoting the number of such partitions of $n$ by $p_{2,2}(n)$, we have that $p_{2,2}(n) \equiv p(n) \pmod{2}$, and since the parity of $p(n)$ is a longstanding question of great interest, $p_{2,2}(n)$ has been much studied, often for its parity properties.

Equal in number with these subsets of partitions of $n$ is the set of those in which the differences between consecutive parts are less than $m$, and the smallest part is less than $m$.  The proof is by \emph{conjugation}, which is defined in terms of the \emph{Ferrers diagram} of a partition: a set of unit squares justified to the origin in the fourth quadrant, in which the $i$-th row below the $x$ axis has $\lambda_i$ squares.  The conjugate of $\lambda$, $\lambda^\prime$, is the partition with Ferrers diagram given by the reflection of the diagram of $\lambda$ across the diagonal.  An example:

\phantom{.}

\begin{center}\begin{tabular}{cc}  $\tiny\young(\hfil\hfil\hfil\hfil,\hfil\hfil\hfil\hfil,\hfil\hfil\hfil:,\hfil:::,\hfil:::,\hfil:::)$ & $\tiny\young(\hfil\hfil\hfil\hfil\hfil\hfil,\hfil\hfil\hfil:::,\hfil\hfil\hfil:::,\hfil\hfil::::)$ \\ $\lambda = (4,4,3,1,1,1) \vdash 14$ & $\lambda^\prime = (6,3,3,2)$
\end{tabular}\end{center}

Now it is easy to see that partitions with parts appearing fewer then $m$ times conjugate to partitions with differences less than $m$ and smallest part less than $m$.  For the remainder of this paper we will call the latter $m$-\emph{flat} partitions, after \cite{Stockhofe}.

\phantom{.}

\noindent \textbf{Remark:} A direct map between $m$-flat and $m$-regular partitions was developed in \cite{Stockhofe}, translated from the German in an appendix to \cite{KThesis}.  (Rather, several involutions on all partitions were constructed, some of which restrict to a map between these sets.)  The fixed points are, again, those that simultaneously satisfy both conditions.

\phantom{.}

Conjugation does not fix those partitions simultaneously $m$-flat and $m$-distinct, but it does fix the class.  In fact, the fixed points of conjugation are in bijection with partitions into distinct odd parts (read vertical-to-horizontal hook lengths).  It might be of interest to develop an involution on partitions which does fix this class; given the utility of conjugation as a theorem-proving tool, its other properties might be of great use.  (If the involution fixes all $m$-flat, $m$-distinct partitions, it will necessarily have at least some other fixed points, as the parity of this subset does not necessairily match that of the number of partitions of $n$.)

In the remainder of the paper we explore those partitions that simultaneously satisfy two of these three conditions, generalizing the question to moduli not necessarily equal for the two conditions.  In Section 2 we discuss partitions simultaneously $s$-regular and $t$-distinct; we can say the most about these.  In Section 3 we discuss $s$-regular, $t$-flat partitions; we can say a few things about these, mostly when $s \vert t$.  In Section 4 we discuss $s$-distinct, $t$-flat partitions; about these we can say little, despite the fact that they have the simplest diagrammatic interpretation.  In the last section we close with some comments and possible lines of future investigation.

\section{Regular and distinct}

The generating function for partitions which are $s$-regular and $t$-distinct is easy to write down: it is \begin{theorem}$$P_{R,D}^{(s,t)}(q) = \sum_{n=0}^\infty p_{R,D}^{(s,t)} (n) q^n = \prod_{k=1}^\infty \frac{(1-q^{sk})(1-q^{tk})}{(1-q^k)(1-q^{stk})}.$$\end{theorem}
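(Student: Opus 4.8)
The plan is to build the generating function directly from the two defining restrictions, using the standard ``geometric series for each part size'' bookkeeping. For a fixed part size $k$, being $t$-distinct means $k$ may appear $0,1,\dots,t-1$ times, contributing the factor $1+q^k+q^{2k}+\dots+q^{(t-1)k} = \frac{1-q^{tk}}{1-q^k}$; being $s$-regular means that if $s \mid k$ the part is forbidden entirely, contributing the factor $1$ instead. So the naive product is $\prod_{s \nmid k} \frac{1-q^{tk}}{1-q^k}$. First I would write this as $\prod_{k=1}^\infty \frac{1-q^{tk}}{1-q^k}$ divided by the factors for the excluded sizes $k = sj$, namely $\prod_{j=1}^\infty \frac{1-q^{tsj}}{1-q^{sj}}$, giving
$$P_{R,D}^{(s,t)}(q) = \prod_{k=1}^\infty \frac{1-q^{tk}}{1-q^k} \cdot \prod_{j=1}^\infty \frac{1-q^{sj}}{1-q^{stj}} = \prod_{k=1}^\infty \frac{(1-q^{sk})(1-q^{tk})}{(1-q^k)(1-q^{stk})},$$
which is exactly the claimed formula.

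The only subtlety is to make sure the rearrangement of the infinite product is legitimate, i.e. that regrouping and cancelling the factors indexed by multiples of $s$ is valid. This is routine: as formal power series in $q$, each factor is $1 + O(q)$, so the infinite products converge in the $q$-adic topology and may be freely rearranged and the telescoping cancellation of $(1-q^{sj})$ against a subset of the $(1-q^k)$ terms is justified term-by-term. I would state this in one sentence rather than belabor it. I should also note that for $s=1$ the product degenerates correctly (the empty partition only, consistent with $1$-regular meaning ``no parts''), and for $t=1$ likewise; these edge cases need no separate argument since the factor-by-factor derivation already covers them.

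The step I expect to require the most care in exposition — though it is not deep — is simply being explicit that the per-part-size factor is $\frac{1-q^{tk}}{1-q^k}$ when $s \nmid k$ and $1$ when $s \mid k$, and that the full generating function is the product of these over all $k \geq 1$ because the exponents of $q$ coming from distinct part sizes are independent and additive. Everything after that is formal manipulation. I would present the proof in three short moves: (1) write $P_{R,D}^{(s,t)}(q) = \prod_{s \nmid k}\frac{1-q^{tk}}{1-q^k}$ from the definitions; (2) insert and cancel the missing factors to turn the restricted product into a ratio of four unrestricted products; (3) combine into the single product over $k$ displayed in the theorem.
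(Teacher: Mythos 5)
Your derivation is correct and is exactly the standard argument the paper has in mind: the paper states this identity without proof, calling it ``easy to write down,'' and the per-part-size factor $\frac{1-q^{tk}}{1-q^k}$ for $s \nmid k$ together with the insert-and-cancel step is precisely how one fills that gap. Nothing further is needed.
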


$P_{R,D}^{(s,t)} (q)$ is an $\eta$-quotient, i.e. (up to a factor of a power of $q$) a quotient of functions of the form $\eta(z) = q^{1/24} \prod_{n=1}^\infty (1-q^n)$, $q=e^{2\pi i z}$.  By work of Stephanie Treneer \cite{Treneer}, it is known that all such functions are weakly holomorphic modular forms, and so it is likely that they will exhibit many congruences.  Numerical experimentation quickly finds many.  For instance, 

\begin{theorem}\label{PRDCongruences} For $n \geq 0$, 
\begin{align}
p_{R,D}^{(2,2)} (125n+99) &\equiv 0 \pmod{5} \quad \text{(R{\o}dseth)} \\
p_{R,D}^{(3,3)} (4n+2) &\equiv 0 \pmod{2} \\
p_{R,D}^{(2,5)} (4n+3) &\equiv 0 \pmod{2} \quad \text{ and } \\
\sum_{n=0}^\infty p_{R,D}^{(2,5)} (4n+1) q^n &\equiv f_5 \pmod{2}.
\end{align}
\end{theorem}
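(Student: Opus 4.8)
The plan is to work entirely modulo the relevant small prime and exploit the product formula from Theorem 2 together with classical identities. For the first congruence $p_{R,D}^{(2,2)}(125n+99)\equiv 0\pmod 5$, I would note that $p_{R,D}^{(2,2)}(n)$ counts partitions into distinct odd parts, whose generating function is $\prod (1+q^{2k-1}) = \prod \frac{(1-q^{2k})^2}{(1-q^k)(1-q^{4k})}$ in agreement with the $(s,t)=(2,2)$ specialization. This is a well-studied $\eta$-quotient, and the congruence is attributed to R{\o}dseth; the plan here is simply to cite R{\o}dseth's dissection of $\prod(1+q^{2k-1})$ and to carry out (or recall) the $5$-dissection showing the relevant subprogression vanishes mod $5$. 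I would not reprove it from scratch.

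For the mod $2$ statements the strategy is uniform. Modulo $2$ we have $(1-q^k)\equiv (1+q^k)$ and, crucially, $(1-q^k)^2 \equiv (1-q^{2k})$, so any even power of an $\eta$-factor can be halved in its exponent. First I would reduce each product $\prod \frac{(1-q^{sk})(1-q^{tk})}{(1-q^k)(1-q^{stk})}$ mod $2$ to a manageable form. For $(s,t)=(3,3)$: the product is $\prod \frac{(1-q^{3k})^2}{(1-q^k)(1-q^{9k})} \equiv \prod \frac{(1-q^{6k})}{(1-q^k)(1-q^{9k})}\pmod 2$. I would then combine this with Jacobi's identity $\prod(1-q^k)^3 \equiv \sum_{j\ge 0} q^{j(j+1)/2}\pmod 2$ (equivalently Gauss), or with Euler's pentagonal number theorem, to get an explicit theta-like series; reading off the arithmetic progression $4n+2$ of exponents and checking the exponents' residues mod $4$ gives the vanishing. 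For $(2,5)$: $\prod \frac{(1-q^{2k})(1-q^{5k})}{(1-q^k)(1-q^{10k})} \equiv \prod \frac{(1-q^{5k})}{(1-q^k)(1-q^{10k})}\cdot(1-q^k)\cdot\text{(correction)}$; more carefully $\frac{1-q^{2k}}{1-q^k}=1+q^k$, so mod $2$ the generating function is $\prod \frac{(1+q^k)(1-q^{5k})}{1-q^{10k}} \equiv \prod \frac{(1-q^{5k})}{(1-q^k)(1-q^{10k})}$. I would massage this into a product of two or three classical theta functions (using that $\prod(1-q^{5k})/(1-q^{10k}) = \prod(1+q^{5k})^{-1}\cdot\text{stuff}$ is itself theta-like after pairing), extract the $4n+3$ and $4n+1$ subseries, and identify the latter with $f_5 := \prod(1-q^{5k})$ (the notation $f_5$ presumably being defined earlier or standard).

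The key technical engine throughout is a $2$- or $4$-dissection of an explicit product of theta functions: after the mod-$2$ reductions, each $P_{R,D}$ becomes (mod $2$) a finite product of series of the form $\sum q^{\text{quadratic}}$, and the claim reduces to a statement about which residues mod $4$ are represented by a sum of such quadratics with odd multiplicity. I expect the main obstacle to be precisely this dissection bookkeeping — organizing the cross-terms of the product of two or three theta series, tracking their exponents modulo $4$, and verifying that the "bad" residue class receives each representation an even number of times (so it cancels mod $2$), while for the $4n+1$ case the surviving terms assemble exactly into $f_5$. A secondary obstacle is the R{\o}dseth congruence mod $5$: that one is genuinely deeper (it is a prime-power modulus congruence, not just mod $2$), and rather than reconstruct it I would lean on the cited source, at most sketching how the $5$-dissection of $\psi(q)=\sum q^{k(k+1)/2}$ or of $\prod(1+q^{2k-1})$ isolates the progression $125n+99$.
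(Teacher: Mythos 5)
Your handling of clause (1) matches the paper: both simply cite R{\o}dseth rather than reproving the mod-$5$ congruence. For the mod-$2$ clauses you have the right basic tool ($f_k^2 \equiv f_{2k} \pmod 2$), but the central step of your plan fails. You claim that after reduction mod $2$ each generating function ``becomes a finite product of series of the form $\sum q^{\text{quadratic}}$,'' to be dissected by elementary bookkeeping. It does not: these quotients retain genuine infinite products in the denominator modulo $2$. For $(s,t)=(3,3)$ the reduction gives $\frac{f_3^2}{f_1 f_9} \equiv \frac{f_6}{f_1 f_9}$, and no amount of squaring-and-halving clears the $f_1 f_9$ downstairs (iterating $\frac{1}{f_1} \equiv \frac{f_1^3}{f_4}$ just pushes the denominator to higher levels forever). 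The paper supplies exactly the ingredient you are missing: explicit $2$-dissection identities, quoted from Xia--Yao, for $\frac{f_3^3}{f_1}$, $\frac{1}{f_1 f_3}$, and $\frac{f_5}{f_1}$. One factors $P_{R,D}^{(3,3)} = \frac{f_3^3}{f_1}\cdot\frac{1}{f_3 f_9}$, substitutes the two dissections (the second with $q \to q^3$), and only then does the parity bookkeeping you describe. Those dissection identities are the real content of the proof; they are classical theta-function facts, not consequences of the pentagonal number theorem or Jacobi's identity by routine manipulation, and your sketch gives no way to obtain them.

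There is also an outright algebra error in your $(2,5)$ reduction: $\prod_k(1+q^k) = \frac{f_2}{f_1} \equiv f_1 \pmod 2$, not $\frac{1}{f_1}$, so the correct reduction is $\frac{f_2 f_5}{f_1 f_{10}} \equiv \frac{f_1 f_5}{f_{10}} \equiv \frac{f_1}{f_5} \pmod 2$, whereas your candidate $\frac{f_5}{f_1 f_{10}}$ differs from this by a factor of $f_1^2 \equiv f_2$. Starting from the wrong series, the extraction of the $4n+1$ progression cannot assemble into $f_5$. The paper's route instead writes $P_{R,D}^{(2,5)} = \frac{f_2}{f_{10}}\cdot\frac{f_5}{f_1}$, applies the $2$-dissection of $\frac{f_5}{f_1}$, observes that the even-indexed summand contributes nothing odd in degrees $\equiv 3 \pmod 4$ (giving clause (3)), and reduces the odd-indexed summand to $q\,f_{20}$ modulo $2$, from which clause (4) follows upon replacing $q^4$ by $q$.
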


Here and in the remainder of the paper we employ the shorthand notation $f_k$ for $$f_k = \prod_{n=1}^\infty (1-q^{nk}) = q^{-k/24} \eta(kz) = (q^k;q^k)_\infty.$$  Furthermore, when we state for two power series $f(q) = \sum_{n=n_0}^\infty a(n)q^n$ and $g(q) = \sum_{n=n_1}^\infty b(n) q^n$ that $f(q) \equiv_p g(q)$, we mean that $a(n) \equiv b(n) \pmod{p}$ for all $n$.

\begin{proof}
As noted, the first clause of Theorem \ref{PRDCongruences} was proved by {\O}ystein R{\o}dseth \cite{Rodseth}, who was studying the properties of $p_{2,2}(n)$.

To prove the other clauses we use several identities that dissect various $\eta$-products.  All of the ones we use here can be found in \cite{XiaYao}.  In addition, it is useful to note that $f_k^p \equiv_p f_{kp}$ for $p$ any prime.

For clause (2) we will need: \begin{align}
\frac{{f_3}^3}{f_1} &= \frac{f_4^3f_6^2}{f_2^2f_{12}}+q\frac{f_{12}^3}{f_4} \\
\frac{1}{f_1f_3} &= \frac{f_8^2f_{12}^5}{f_2^2f_4f_6^4f_{24}^2} + q \frac{f_4^5f_{24}^2}{f_2^4f_6^2f_8^2f_{12}}.
\end{align}

Now observe that $$P_{R,D}^{(3,3)}(q) = \frac{f_3^2}{f_1f_9} = \frac{f_3^3}{f_1}\cdot \frac{1}{f_3f_9} = \left( \frac{f_4^3f_6^2}{f_2^2f_{12}}+q\frac{f_{12}^3}{f_4} \right) \left( \frac{f_{24}^2f_{36}^5}{f_6^2f_{12}f_{18}^4f_{72}^2} + q^3 \frac{f_{12}^5f_{72}^2}{f_6^4f_{18}^2f_{24}^2f_{36}} \right).$$

Expanding out the multiplication and reducing modulo 2 where possible, we find $$ P_{R,D}^{(3,3)} (q) \equiv_2 \frac{f_4^3f_{24}^2f_{36}^5}{f_2^2f_{12}^2 f_{18}^4f_{72}^2} + q\left( \dots \right) + q^3\left( \dots \right) + q^4 \frac{f_{12}^8f_{72}^2}{f_4f_6^4f_{18}^2f_{24}^2f_{36}}.$$

The elided terms are all of the form $q^{2n+1}$ and so are irrelevant to the theorem.  Furthermore, neither of the other summands contains powers of the form $q^{4n+2}$ with odd coefficients, since all factors of $f_2$, $f_4$, and $f_{18}$ are raised to even powers, and we may invoke $f_2^2 \equiv_2 f_4$. Hence only powers $q^{4n}$ in these summands may have noneven coefficients, and hence any coefficient $p_{R,D}^{(3,3)}(4n+2) \equiv 0 \pmod{2}$.

For clauses (3) and (4), we additionally require the identity \[ \frac{f_5}{f_1} = \frac{f_8 f_{20}^2}{f_2^2f_{40}} + q \frac{f_4^3f_{10}f_{40}}{f_2^3f_8f_{20}}.\]

Thus

$$P_{R,D}^{(2,5)} (q) = \frac{f_2 f_5}{f_1f_{10}} = \frac{f_8f_{20}^2}{f_2 f_{10}f_{40}} + q \frac{f_4^3 f_{40}}{f_2^2 f_8f_{20}}.$$

Again, since $f_2^2 \equiv_2 f_4$, no term in the latter summand has a noneven coefficient on a power $q^{4n+3}$, and so claim (3) holds.  Further using this identity to reduce the summand, we find that $$P_{R,D}^{(2,5)} (q) \equiv_2 \dots + q f_{20},$$

\noindent where the elided terms are even powers.  Extracting terms of the form $q^{4n+1}$ and making the substitution $q^4 \rightarrow q$, we obtain clause (4), and the theorem holds.

\end{proof}

Many other such congruences can easily be found and proved through similar methods.

\subsection{Symmetry}

Another observation of interest is the symmetry of the generating function, which yields the immediate result

\begin{theorem} The number of partitions of $n$ which are $s$-regular and $t$-distinct equals the number of partitions of $n$ which are $t$-regular and $s$-distinct.
\end{theorem}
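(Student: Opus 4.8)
The plan is to read this off the generating-function identity for $P_{R,D}^{(s,t)}(q)$ established above. First I would write
$$P_{R,D}^{(s,t)}(q) = \prod_{k=1}^\infty \frac{(1-q^{sk})(1-q^{tk})}{(1-q^k)(1-q^{stk})}$$
and simply observe that the right-hand side is invariant under the transposition $s \leftrightarrow t$: the numerator $(1-q^{sk})(1-q^{tk})$ is symmetric in $s$ and $t$, the factor $(1-q^k)$ in the denominator involves neither parameter, and the factor $(1-q^{stk})$ depends only on the product $st$. Hence $P_{R,D}^{(s,t)}(q) = P_{R,D}^{(t,s)}(q)$ as formal power series in $q$.

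Comparing the coefficient of $q^n$ on both sides then gives $p_{R,D}^{(s,t)}(n) = p_{R,D}^{(t,s)}(n)$ for every $n \geq 0$, which is exactly the claim. There is essentially no obstacle here: the entire content sits in the product formula, which is already proved, and the symmetry is then visible by inspection.

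The only thing one might wish for in addition — and the genuinely hard part, which I would not pursue in this proof — is an explicit combinatorial bijection between the $s$-regular, $t$-distinct partitions of $n$ and the $t$-regular, $s$-distinct partitions of $n$ that does not pass through the generating function. Even for small parameters such as $(s,t) = (2,3)$ versus $(3,2)$ this does not seem to admit an obvious description, so the generating-function argument is the natural one to record.
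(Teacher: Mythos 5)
Your proof is correct and is essentially the paper's own argument: the theorem is derived immediately from the visible symmetry of the product formula for $P_{R,D}^{(s,t)}(q)$ under $s \leftrightarrow t$. (The paper does go on to supply the combinatorial bijection you mention, at least for coprime $s$ and $t$, via the composition $\phi_s\phi_t$ of Glaisher maps; the non-coprime case is left as a conjecture of Tenner.)
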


It is then reasonable to ask for a map that realizes this equality: as it turns out, if $s$ and $t$ are coprime, a double use of Glaisher's bijection does the job.  Denote by $\phi_m$ Glaisher's involution with modulus $m$.  Then we have the following.

\begin{theorem} If $s$ and $t$ are coprime, then $\phi_s \phi_t$ maps $s$-regular, $t$-distinct partitions to $t$-regular, $s$-distinct partitions.
\end{theorem}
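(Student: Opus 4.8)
The plan is to reduce everything to two facts about Glaisher's involution $\phi_m$. The first is the property already recorded in the introduction: $\phi_m$ interchanges the $m$-distinct partitions and the $m$-regular partitions (its fixed points being the partitions that are both). The second is a stability statement I will prove below: \emph{if $\gcd(s,t)=1$ then $\phi_t$ carries $s$-regular partitions to $s$-regular partitions}. Granting both, the argument is a short chase. Let $\lambda$ be $s$-regular and $t$-distinct. Since $\lambda$ is $t$-distinct, $\phi_t(\lambda)$ is $t$-regular by the first fact; since $\lambda$ is $s$-regular, $\phi_t(\lambda)$ is still $s$-regular by the second fact. Thus $\phi_t(\lambda)$ is simultaneously $s$-regular and $t$-regular. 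Now apply $\phi_s$: since $\phi_t(\lambda)$ is $s$-regular, $\phi_s\phi_t(\lambda)$ is $s$-distinct by the first fact; and since $\phi_t(\lambda)$ is $t$-regular, the second fact with $s$ and $t$ exchanged shows $\phi_s\phi_t(\lambda)$ is still $t$-regular. Hence $\phi_s\phi_t(\lambda)$ is $t$-regular and $s$-distinct, as claimed.

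To prove the stability statement I would first pin down which parts can occur in $\phi_m(\mu)$. From the defining recipe, every value occurring in $\phi_m(\mu)$ has the form $jm^\ell$ with $m\nmid j$, and its multiplicity is $\sum_k a_{k,\ell}m^k$, where $a_{k,\ell}$ is the $\ell$-th base-$m$ digit of the multiplicity of $jm^k$ in $\mu$; a nonzero multiplicity therefore forces $a_{k,\ell}>0$ for some $k$, hence $jm^k$ to be an actual part of $\mu$ for that same $j$. In other words $\phi_m$ merely reshuffles multiplicities within each multiplicative chain $\{jm^r:r\ge 0\}$ and introduces no new primes into the parts. Now take $\mu$ that is $s$-regular and a part $jt^\ell$ of $\phi_t(\mu)$; by the above, $jt^k$ is a part of $\mu$ for some $k$, so $s\nmid jt^k$. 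Since $\gcd(s,t)=1$ we have $\gcd(s,t^r)=1$ for every $r$, whence $s\mid jt^r \iff s\mid j$; thus $s\nmid jt^k$ yields $s\nmid j$, which yields $s\nmid jt^\ell$. So no part of $\phi_t(\mu)$ is divisible by $s$.

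This stability lemma is the only place coprimality is used, and it is the crux: multiplying a part by powers of $t$ cannot change whether it is divisible by an integer prime to $t$, so Glaisher's chains are ``invisible'' to the $s$-divisibility condition precisely when $\gcd(s,t)=1$. Everything else is the bookkeeping chase of the first paragraph, quoting the $m$-distinct $\leftrightarrow$ $m$-regular property of $\phi_m$ from the introduction. Finally, since $\phi_s\phi_t$ is a composition of bijections of the set of partitions of $n$, it is injective; as it maps the $s$-regular, $t$-distinct partitions into the $t$-regular, $s$-distinct partitions, which are equinumerous by the symmetry already established, it in fact restricts to a bijection between the two classes.
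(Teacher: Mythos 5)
Your proposal is correct and follows essentially the same route as the paper: apply $\phi_t$ to land in the $s$-regular, $t$-regular partitions, then apply $\phi_s$, with coprimality guaranteeing that multiplying or dividing a part by powers of one modulus cannot affect divisibility by the other. The only difference is cosmetic — you isolate the coprimality observation as a general stability lemma for $\phi_t$ on $s$-regular partitions, whereas the paper verifies it inline on the specific forms $jt^k$ and $js^k$ arising at each step.
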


Although this could have been observed earlier, we will see in the midst of this proof that 

\begin{corollary} If $s$ and $t$ are coprime, the number of $s$-regular, $t$-distinct partitions is equal to the number of partitions simultaneously $s$-regular and $t$-regular.
\end{corollary}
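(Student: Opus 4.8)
The plan is to obtain the corollary while tracing the action of Glaisher's involution $\phi_t$ on an $s$-regular, $t$-distinct partition, just before the second application of $\phi_s$ completes the proof of the preceding theorem. Recall that $\phi_t$ is an involution on all partitions whose restriction to $t$-distinct partitions is the classical Glaisher bijection onto $t$-regular partitions: a part $j t^k$ with $t \nmid j$ occurring $a$ times (and necessarily $a < t$, by $t$-distinctness) is sent to the part $j$ occurring $a t^k$ times.

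First I would check that $\phi_t$ preserves $s$-regularity, in both directions. If $\lambda$ is $s$-regular and $j t^k$ is a part of $\lambda$, then $s \nmid j t^k$; since $\gcd(s,t) = 1$ this is equivalent to $s \nmid j$, and $j$ is exactly the part of $\phi_t(\lambda)$ produced from $j t^k$. Conversely every part of $\phi_t(\lambda)$ is such a $j$ with $t \nmid j$, so if $\lambda$ is $s$-regular then so is $\phi_t(\lambda)$. Hence $\phi_t$ carries the set of $s$-regular partitions of $n$ to itself.

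Combining this with the classical statement yields the corollary directly: if $\lambda$ is both $s$-regular and $t$-distinct then $\phi_t(\lambda)$ is both $s$-regular (by the above) and $t$-regular (by classical Glaisher), while if $\mu$ is both $s$-regular and $t$-regular then $\phi_t(\mu)$ is $s$-regular (again by the above) and $t$-distinct. Since $\phi_t$ is an involution, these two restrictions are mutually inverse bijections between the $s$-regular, $t$-distinct partitions of $n$ and the $s$-regular, $t$-regular partitions of $n$. (Applying $\phi_s$ to a partition of the latter type then turns $s$-regularity into $s$-distinctness while leaving $t$-regularity untouched, by the same coprimality argument with the roles of $s$ and $t$ exchanged; this recovers the theorem in which the corollary is embedded.)

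The only genuine obstacle is bookkeeping around divisibility versus multiplicity. Coprimality is invoked in exactly one place, to pass between $s \mid j t^k$ and $s \mid j$; one must also be careful that $\phi_t$ does in general change multiplicities of parts, so $\phi_t(\lambda)$ need not be $t$-distinct — and we claim only that it is $t$-regular — but since $s$- and $t$-regularity are conditions on which parts occur and not on how often, the argument is unaffected. As an independent cross-check one can verify the corollary on generating functions: by inclusion--exclusion, using $\mathrm{lcm}(s,t) = st$, one has $\prod_{k \geq 1,\, s \nmid k,\, t \nmid k} (1-q^k)^{-1} = \prod_{k \geq 1} (1-q^{sk})(1-q^{tk})(1-q^k)^{-1}(1-q^{stk})^{-1} = P_{R,D}^{(s,t)}(q)$.
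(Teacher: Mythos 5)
Your proposal is correct and follows essentially the same route as the paper: apply $\phi_t$ to an $s$-regular, $t$-distinct partition, observe that coprimality makes $s \nmid jt^k$ equivalent to $s \nmid j$ so that $s$-regularity is preserved in both directions, and conclude via the involution property that the two sets are in bijection. The generating-function cross-check via inclusion--exclusion is a nice supplementary verification not present in the paper, but the core argument is the same.
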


However, the $s$-distinct, $t$-distinct partitions are merely the $s$-distinct partitions assuming $s$ is the smaller of the two values.

\begin{proof}
If $s$ and $t$ are coprime, then let $\lambda$ be an $s$-regular, $t$-distinct partition.  The first step $\phi_t$ replaces parts of sizes $j t^k$ with appearances of the part $j$; since $j t^k$ was not divisible by $s$, neither is $j$, and so the result is also an $s$-regular, $t$-regular partition; all such partitions can arise this way ($\phi_s$ or $\phi_t$ reverses the map in the desired direction) and so the corollary follows.  At this point, applying $\phi_s$ produces an $s$-distinct partition which is still $t$-regular, since $j$ is not divisible by $t$ and $j s^k$ is also not divisible by $t$, as these are coprime.

\phantom{.}

\noindent \textbf{Example:} Consider $\lambda = (4,2,1)$ as a 7-regular, 2-distinct partition.  Then $\phi_2((4,2,1)) = (1,1,1,1,1,1,1)$, which is both 7-regular and 2-regular. Then $\phi_7((1,1,1,1,1,1,1)) = (7)$, which is 2-regular and 7-distinct.

\end{proof}

If $s$ and $t$ are not coprime, then, during a visit to Michigan Tech it was conjectured by Bridget Tenner of DePaul University that

\begin{conjecture} Iteration of the previous map suffices to produce a bijection.  That is, there exists $\ell$, varying with $\lambda$, such that $(\phi_s \phi_t)^\ell$ maps an $s$-regular, $t$-distinct partition $\lambda$ to a unique $s$-distinct, $t$-regular partition, with no intervening $(\phi_s \phi_t)^k$ being $s$-regular and $t$-distinct.
\end{conjecture}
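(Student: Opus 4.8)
The plan is to analyze the permutation $\psi:=\phi_s\phi_t$ of the finite set of partitions of $n$ through its cycle structure. Write $A$ for the set of $s$-regular, $t$-distinct partitions of $n$ and $B$ for the set of $s$-distinct, $t$-regular ones; then $A\cap B$ is exactly the set of partitions fixed by both $\phi_s$ and $\phi_t$, each of which is its own $\psi$-orbit, so for those we may take $\ell=0$. For any other $\lambda$ let $\ell(\lambda)$ be the least $k\ge 0$ with $\psi^k(\lambda)\in B$. The conjecture then amounts to showing, for each nontrivial $\psi$-cycle $C$ that meets $A$: \textbf{(i)} $C$ also meets $B$, so $\ell(\lambda)$ is well defined; and \textbf{(ii)} reading $C$ forward from any $\lambda\in C\cap A$, the first return to $A\cup B$ lies in $B$ (the ``no intervening'' clause). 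Granting (i) and (ii), the map $\lambda\mapsto\psi^{\ell(\lambda)}(\lambda)$ is a well-defined injection $A\to B$ --- two distinct elements of $C\cap A$ with the same image would place one strictly between the other and that common image, violating (ii) --- and, since $|A|=|B|$ by the symmetry theorem proved above, it is a bijection.

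The ingredients are two facts about Glaisher's map. First, $\phi_m$ interchanges $m$-regular and $m$-distinct partitions, its fixed set being those that are both. Second, the \emph{divisibility remark}: if $\nu$ is $m$-distinct, then the parts of $\phi_m(\nu)$ are exactly the $m$-free parts $a$ (those with $m\nmid a$) of the part-sizes $am^i$ appearing in $\nu$; since $a\mid am^i$, any $r$ with $r\nmid am^i$ has $r\nmid a$, so $\phi_m$ restricted to $m$-distinct partitions preserves $r$-regularity for \emph{every} $r$. From these one deduces a first foothold: if $\lambda\in A$ then $\phi_t(\lambda)$ is both $t$-regular and $s$-regular, whence $\psi(\lambda)=\phi_s(\phi_t(\lambda))$, being the image of an $s$-regular partition under $\phi_s$, is $s$-distinct; thus $\psi(A)$ is contained in the set of $s$-distinct partitions, and dually $\psi^{-1}(B)$ is contained in the set of $t$-distinct partitions.

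The first genuine step --- which I expect to go through cleanly --- is that on a nontrivial cycle $\psi$ never carries $A$ into $A$, nor $B$ into $B$. Suppose $\lambda\in A$ and $\psi(\lambda)\in A$. As $\psi(\lambda)=\phi_s\phi_t(\lambda)$ is $s$-regular, applying $\phi_s$ shows $\phi_t(\lambda)$ is $s$-distinct; as $\lambda$ is $t$-distinct, $\phi_t(\lambda)$ is $t$-regular; and by the divisibility remark $\phi_t(\lambda)$ is $s$-regular. So $\mu:=\phi_t(\lambda)$ is simultaneously $s$-regular, $s$-distinct and $t$-regular, hence fixed by $\phi_s$, so $\psi(\lambda)=\phi_s(\mu)=\mu$; but $\psi(\lambda)$ is also $t$-distinct, so $\mu$ is fixed by $\phi_t$ as well, and then $\mu\in A\cap B$ while $\mu=\phi_t(\lambda)$ gives $\lambda=\mu$ --- a trivial cycle, contrary to hypothesis. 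The statement for $B$ follows by applying this to $\psi^{-1}=\phi_t\phi_s$ with the roles of $s,t$ and of $A,B$ exchanged.

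The main obstacle is to strengthen ``no $A\to A$ and no $B\to B$'' to honest alternation of $A$- and $B$-elements around each nontrivial cycle, and to prove (i). In every example computed, a nontrivial cycle contains exactly one element of $A$ and exactly one of $B$, the remaining members lying in neither set; the natural approach is to pass to the finer dihedral orbit $O$ of $\lambda$ under $\langle\phi_s,\phi_t\rangle$, argue that the zigzag $\dots,\phi_s,\phi_t,\phi_s,\dots$ turns $O$ into a path whose two ends are fixed by $\phi_s$ or $\phi_t$, and count the $A$- and $B$-members along $O$. Pinning down that count is exactly where $\gcd(s,t)>1$ bites: Glaisher's map for modulus $s$ regroups part-sizes by their $s$-adic structure, which is incompatible with the $t$-adic bookkeeping driving the modulus-$t$ map, so there is no reduction to the coprime case --- where $O$ is so short that $\ell\equiv 1$ and the conjecture is the theorem already proved --- nor to a single modulus. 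I expect the proof to rest on exhibiting a monovariant that strictly decreases under $\psi$ along any trajectory that starts in $A$ and has not yet reached $B$; the number of parts of $\lambda$ (counted with multiplicity) that are divisible by $t$ is the candidate suggested by small cases, and combined with $\psi(A)\subseteq\{s\text{-distinct}\}$ and the ``no $A\to A$'' step it would yield both (i) and (ii).
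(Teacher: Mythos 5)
First, be aware that the paper does not prove this statement: it is recorded as a conjecture (attributed to Tenner), and the author explicitly retracts an earlier claim that the proof is straightforward, so there is no paper argument to measure you against. Your partial observations are correct and are the natural first moves: writing $A$ for the $s$-regular, $t$-distinct partitions, $B$ for the $s$-distinct, $t$-regular ones, and $\psi=\phi_s\phi_t$, the identification of $A\cap B$ with the common fixed set, the fact that $\phi_m$ restricted to $m$-distinct partitions preserves $r$-regularity for every $r$, the inclusion $\psi(A)\subseteq\{s\text{-distinct}\}$, and the one-step exclusion that a nontrivial cycle never sends an element of $A$ to an element of $A$ all check out, as does the reduction to your conditions (i) and (ii) followed by injectivity and $|A|=|B|$.

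The gap, which you yourself flag, is genuine and is the entire content of the conjecture: nothing you establish rules out a cycle that meets $A$ at steps $0$ and, say, $7$ while reaching $B$ only later, nor a cycle meeting $A$ but not $B$; the single-step statement $A\not\to A$ is far weaker than the first-return property (ii). Moreover, the specific device you propose to close the gap is refuted by the paper's own cautionary example. For $\lambda=(50,50,50,50,50,50)$ viewed as a $6$-regular, $10$-distinct partition, the paper reports that $65$ iterations of $\psi$ are required before a $6$-distinct, $10$-regular partition is reached. Your candidate monovariant, the number of parts divisible by $t=10$, equals $6$ at the start and is a nonnegative integer, so it cannot strictly decrease for $65$ consecutive steps; it is therefore not monotone on the pre-$B$ segment of the trajectory. (The same example, which the paper says returns to the starting partition at a half-step of the $63$rd iteration, also undercuts the hope that the dihedral orbit under $\langle\phi_s,\phi_t\rangle$ is a short, tidily terminated path.) The conjecture remains open; any successful argument will need a control quantity that survives the incompatible $s$-adic and $t$-adic regroupings when $\gcd(s,t)>1$, and that is precisely what is missing here.
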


Since $\phi_s$ and $\phi_t$ are involutions and the set of partitions of $n$ is finite, the sequence of images $(\phi_s \phi_t)^\ell (\lambda)$ eventually cycles for any $\lambda$; the claim then becomes that such a sequence starting at an $s$-regular, $t$-distinct partition will encounter a $t$-regular, $s$-distinct partition before encountering another $s$-regular, $t$-distinct partition.  (The author must retract a claim made during the presentation at CANT 2016 that the proof of this conjecture is nontrivial but straightforward.  For an indication of the curious behavior that such a sequence can display, the reader might examine the behavior of $(50,50,50,50,50,50)$ as a 6-regular, 10-distinct partition; the map works, but requires 65 iterations, and actually passes through $(50,50,50,50,50,50)$ again halfway through the  63rd step.)

\subsection{McKay-Thompson Series}

For a final comment on the $P_{R,D}^{(s,t)}$ partitions, there is a connection which may be spurious but could be very interesting if it is true in any depth.

To first give some background, recall the $j$-invariant $$j(\tau) = \frac{1}{q} + 196884q + 21493760q^2+\dots .$$

Monstrous Moonshine \cite{Moonshine} is the conjecture, now theorem \cite{MoonshineProof}, that the coefficients of this function are sums of the dimensions of irreducible representations of the Monster group $M$: $1=1$, $196884 = 196883+1$, $21296876+196883+1$, etc.  That is, there is an $\infty$-dimensional graded representation of $M$ whose graded dimension is given by these coefficients, and whose lower-weight pieces decompose into irreps of dimension 1, 196883, 21296876, etc, which sum in fairly simple ways to the coefficients of $j$.  The graded dimension is the graded trace of the identity element $e \in M$; the McKay-Thompson series $T_g$ is the generating function for the graded traces of nontrivial elements $g \in M$.

If we search the invaluable Online Encyclopedia of Integer Sequences \cite{OEIS} for the coefficients of the generating function $P_{(R,D)}^{(3,3)}$, we find that they match OEIS sequence A112194 \cite{OEISMcKay}: ``McKay-Thompson series of class 54c for the Monster group.''  McKay-Thompson series are often of the form $\frac{f_s f_t}{f_1 f_{st}}$, usually shifted by a power of $q$ and with a substitution $q \rightarrow q^\ell$; for instance, the generating function for this McKay-Thompson series is actually $\frac{1}{q} P_{(R,D)}^{(3,3)} (q^6)$.  With a little more searching we find many of these in the OEIS: $(s,t) = (2,5)$ gives class 60F; $(s,t) = (3,4)$ gives 48h; $(s,t) = (5,7)$ gives class 35B, but $(s,t) = (3,7)$ is not there.

So one wonders: is there is a simple, partition-theoretic interpretation of these generating functions in terms of the dimensions being counted?  That is:

\phantom{.}

\noindent \textbf{Question 1:} Are there structures in $M$ or its representations which are in bijection with partitions into, say, partitions into parts not divisible by 2 and appearing less than 5 times, which yield the graded traces of elements in the apparently associated conjugacy classes?

\phantom{.}

Since any $(s,t)$ is a permissible pair for $P_{R,D}^{(s,t)}$, but McKay-Thompson series are restricted by the Monster group itself, such combinatorial descriptions might be ``coincidental''; but, given the great interest in the structure of the Monster group and its subgroups, even descriptions in a few cases might be valuable and interesting in their own right.

\section{Regular and flat}

In this section we discuss partitions simultaneously $s$-regular and $t$-flat.  For these, we can write down the generating function in some restricted cases: namely, when $s \vert t$, much more easily if $s = t$.

We defined $(q;q)_\infty$ earlier; it now becomes useful for us to generalize to the notation $(a;q)_n = \prod_{i=0}^{n-1} (1-a q^i)$, in which case $(q;q)_\infty = \lim_{n\rightarrow \infty} (q;q)_n$.  The empty product is 1, so $(a;q)_0 = 1$.

\subsection{$t$-regular, $t$-flat partitions}

When $s=t$ our task is easiest.  

\begin{theorem} The generating function for partitions both $t$-regular and $t$-flat is $$P_{R,F}^{(t,t)} = \sum_{j=0}^\infty \sum_{i=0}^j\frac{(-1)^i q^{\binom{i+1}{2}t+j-i} (q^{(i+1)t};q^t)_{j-i}}{(q;q)_{j-i}}.$$
\end{theorem}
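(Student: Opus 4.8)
The plan is to sort these partitions by their number of parts. Write $P_{R,F}^{(t,t)}(q)=\sum_{j\ge 0}T_j(q)$, where $T_j(q)$ generates the partitions that are simultaneously $t$-regular, $t$-flat, and have exactly $j$ parts; it then suffices to show that $T_j(q)$ equals the inner sum $\sum_{i=0}^j\frac{(-1)^iq^{\binom{i+1}{2}t+j-i}(q^{(i+1)t};q^t)_{j-i}}{(q;q)_{j-i}}$.

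The first step is a bijection. Given such a $\lambda=(\lambda_1\ge\dots\ge\lambda_j)$, let $\rho_\ell\in\{1,\dots,t-1\}$ be the residue of $\lambda_\ell$ modulo $t$; these are nonzero precisely because $\lambda$ is $t$-regular. Since $\lambda$ is $t$-flat we have $0<\lambda_j<t$, so $\lambda_j=\rho_j$; and $0\le\lambda_\ell-\lambda_{\ell+1}<t$ forces the gap $\lambda_\ell-\lambda_{\ell+1}$ to be the unique representative $(\rho_\ell-\rho_{\ell+1})\bmod t$. Hence $\lambda$ is reconstructed from the word $\rho$, and conversely every word in $\{1,\dots,t-1\}^j$ arises, so $\lambda\mapsto(\rho_1,\dots,\rho_j)$ is a bijection onto $\{1,\dots,t-1\}^j$. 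Writing $A(\rho)=\{\ell\in\{1,\dots,j-1\}:\rho_\ell<\rho_{\ell+1}\}$ for the ascent set, an Abel-summation computation (expand $\lambda_\ell=\rho_j+\sum_{k=\ell}^{j-1}(\rho_k-\rho_{k+1}+t\,[\rho_k<\rho_{k+1}])$ and sum over $\ell$) gives $|\lambda|=\sum_{\ell}\rho_\ell+t\sum_{\ell\in A(\rho)}\ell$, whence
\[
T_j(q)=\sum_{\rho\in\{1,\dots,t-1\}^j}q^{\sum_{\ell}\rho_\ell\,+\,t\sum_{\ell\in A(\rho)}\ell}.
\]

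The second step is to evaluate this finite sum. I would refine it by the value $v=\rho_j$ of the smallest part: setting $T_j^{(v)}(q)=\sum_{\rho:\rho_j=v}q^{\sum\rho_\ell+t\sum_{A(\rho)}\ell}$, splitting off the last letter and noting that position $j-1$ is an ascent exactly when $\rho_{j-1}<v$ yields the linear recursion $T_j^{(v)}=q^v\bigl(\sum_{w=v}^{t-1}T_{j-1}^{(w)}+q^{t(j-1)}\sum_{w=1}^{v-1}T_{j-1}^{(w)}\bigr)$ (with the evident base case), which combinatorially says: adjoin a new smallest part $v$ to a partition on $j-1$ parts, adding $t$ to every old part when $v$ exceeds the old minimum. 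Solving this by induction on $j$ gives the closed form. An alternative route is to group the $\rho$ by their ascent set and then loosen to the conditions "$\rho_\ell<\rho_{\ell+1}$ for all $\ell$ in a fixed $B\subseteq\{1,\dots,j-1\}$", so that Möbius inversion over the Boolean lattice, using $\sum_{A\subseteq B}(-1)^{|A|}q^{t\sum_{\ell\in A}\ell}=\prod_{\ell\in B}(1-q^{t\ell})$, rewrites $T_j$ as $\sum_B(-1)^{|B|}M_B(q)\prod_{\ell\in B}(1-q^{t\ell})$, where $M_B$ factors over the maximal runs of $B$ (each run of length $L-1$ forcing a strictly increasing block of $L$ letters, contributing $q^{\binom{L+1}{2}}\binom{t-1}{L}_q$); reindexing by $i=|B|$ then collapses this to the stated sum.

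The main obstacle is precisely this last evaluation. Because the exponent weights each ascent $\ell$ of $\rho$ by $\ell$ itself, the sum is not a textbook terminating $q$-binomial sum, and the bookkeeping needed to produce a single $(q;q)_{j-i}$—rather than $(q^t;q^t)_{j-i}$—in the denominator is the technical heart; indeed the term $i=j$, namely $(-1)^jq^{\binom{j+1}{2}t}$, corresponds to no partition at all, which shows that the alternating sum in the statement is a re-summed inclusion–exclusion and not the naive one over the parts divisible by $t$. I expect this to be routine but lengthy, and would organize the write-up as an induction on $j$, carrying along whichever refinement ($T_j^{(v)}$, or an auxiliary generating function in a second variable) makes the inductive step close.
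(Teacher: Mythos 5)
Your first step is correct, and it is essentially the parametrization the paper itself uses later in the same section for the $q$-multinomial form $P_{R,F}^{(t,t;\rho)}$: the $t$-regular, $t$-flat partitions with exactly $j$ parts are in bijection with words $\rho\in\{1,\dots,t-1\}^j$, with $|\lambda|=\sum_{\ell}\rho_\ell+t\sum_{\ell\in A(\rho)}\ell$. The gap is the second step, which you yourself flag as ``the technical heart'' but do not carry out, and which I do not believe is routine in the form proposed. In the M\"obius-inversion route you arrive at $\sum_{B\subseteq\{1,\dots,j-1\}}(-1)^{|B|}M_B(q)\prod_{\ell\in B}(1-q^{t\ell})$, where $M_B$ depends on the run structure of $B$ and the product depends on the actual elements of $B$, not merely on $i=|B|$; moreover every such product omits the factor $1-q^{jt}$, which \emph{is} present in $(q^{(i+1)t};q^t)_{j-i}=\prod_{\ell=i+1}^{j}\bigl(1-q^{t\ell}\bigr)$, so there is no term-by-term reindexing --- and indeed your left side is a polynomial while the target has $(q;q)_{j-i}$ in denominators. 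Whatever resummation closes this is a genuine identity still in need of proof. Likewise, ``solving the recursion for $T_j^{(v)}$ by induction'' presupposes a closed form for each $T_j^{(v)}$ that you have not supplied, so neither route currently yields the stated formula.

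The paper's proof sidesteps the ascent-weighted sum entirely. It observes that every $t$-regular partition with exactly $j$ parts decomposes uniquely as its $t$-flat part (still $t$-regular, still $j$ parts) plus a partition into parts from $\{t,2t,\dots,jt\}$ recording the excess differences, so that $P_{R,F}^{(j\text{ parts})}\cdot\frac{1}{(q^t;q^t)_j}=P_R^{(j\text{ parts})}$; it then computes $P_R^{(j\text{ parts})}=\sum_{i=0}^j\frac{q^{j-i}}{(q;q)_{j-i}}(-1)^iq^{\binom{i+1}{2}t}\frac{1}{(q^t;q^t)_i}$ by inclusion--exclusion on the number of distinct part sizes divisible by $t$, and multiplies through using $(q^t;q^t)_j/(q^t;q^t)_i=(q^{(i+1)t};q^t)_{j-i}$. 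If you want to keep your parametrization, the cleanest repair is to first prove the $q$-multinomial formula your bijection naturally gives (via equidistribution of the major index over multiset permutations) and then separately establish the identity equating the sum of those $q$-multinomials to the stated double sum; but that identity is precisely the content that is missing, and the paper's factor-and-include-exclude argument is considerably shorter.
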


\begin{proof}

The proof strategy is to note that a $t$-regular partition can be broken into its flat part, plus differences of multiples of $t$:

\begin{center}
\begin{tabular}{cccc}
$a_1$ & $t$ & $t$ & $t$ \\
$a_2$ & $t$ & $t$ & \\
$a_3$ & $t$ & $t$ & \\
$a_4$ & $t$ & $t$ & \\
$a_5$ & & & 
\end{tabular}
\end{center}

\noindent where the $a_i$ are nonzero residues modulo $t$, and each $t$ represents $t$ added to the part.  If $a_{i+1} \leq a_i$, then the number of $t$ units in the flat part of $\lambda_i$ equals the number of such units in $\lambda_{i+1}$, whereas if $a_{i+1} > a_i$, the number of $t$ units in $\lambda_i$ is 1 greater than the number in $\lambda_{i+1}$.  For example, if the above diagram represents the 5-regular partition $(17,13, 11, 11, 4)$, then the flat part of the partition is $(12,8,6,6,4)$.  An amount $5$ was added to parts 1 through 4.  Notice that the $t$-flat part of a $t$-regular partition is still $t$-regular; more generally, the $s$-flat part of a $t$-regular partition is still $t$-regular if $t$ divides $s$.

The amounts added will be multiples of $t$ of sizes up to $t$ times the number of parts of the partition; thus, the generating function for $t$-regular partitions with exactly $j$ parts equals the generating function for $t$-flat, $t$-regular partitions with exactly $j$ parts, times the generating function for partitions into multiples of $t$ no larger than $jt$.

Thus, suppressing the $t$ for now and referring only to the generating functions for partitions of the desired type into exactly $j$ parts, we have 

$$P_{R,F}^{\text{(j parts)}} (q) \times \frac{1}{(q^t;q^t)_j} = P_R^{\text{(j parts)}}.$$

Next we must determine the generating function for $t$-regular partitions into exactly $j$ parts.  We do so by considering all partitions of inclusion-exclusion on the number of sizes of parts of $\lambda$ divisible by $t$, obtaining the following: 

\begin{lemma}$$P_R^{\text{(j parts)}} = \sum_{i=0}^j \frac{q^{j-i}}{(q;q)_{j-i}} (-1)^i q^{\binom{i+1}{2}t} \frac{1}{(q^t;q^t)_i}.$$\end{lemma}

The argument is as follows: begin with $j-i$ guaranteed parts of size 1 and add any desired amount; add exactly $i$ sizes of part divisible by $t$, from $t$ to $it$; finally, add additional multiples of $t$ to these parts alone.  Count those in which we guaranteed at least $i$ different sizes of part divisible by $t$ with $(-1)^i$; by inclusion-exclusion, the resulting sum counts exactly those partitions with no part divisible by $t$.

So, combining identities, 

$$P_{R,F}^{(j \text{ parts})} (q) \times \frac{1}{(q^t;q^t)_j} = P_R^{(j \text{ parts})} = \sum_{i=0}^j \frac{q^{j-i}}{(q;q)_{j-i}} (-1)^i q^{\binom{i+1}{2}t} \frac{1}{(q^t;q^t)_i}.$$

Multiplying through, we obtain 

$$P_{R,F}^{(j \text{ parts})} (q) = \sum_{i=0}^j \frac{q^{j-i}}{(q;q)_{j-i}} (-1)^i q^{\binom{i+1}{2}t} (q^{(i+1)t};q^t)_{j-i}.$$

Summing over numbers of parts $j$, we complete the proof.

\end{proof}

An alternative version of this generating function has more terms but is combinatorially interesting.  Observe that, given a vector $\rho$ of nonzero residues modulo $t$, the $t$-flat partition with residues equal to $\rho$ when read in order is uniquely given.  The number of units of size $t$ below residue $\rho_i$ is precisely the number of pairs $(\rho_k, \rho_{k+1})$ with $k \geq i$ for which $\rho_k < \rho_{k+1}$, i.e. the number of ascents in the multiset permutation, identified by $\rho$, of the multiset of residues listed.

\phantom{.}

\noindent \textbf{Example:} Suppose that $t=3$ and that $\rho$ consists of two 1s and 2s each.  The possible partitions are:

\begin{center}\begin{tabular}{cccccc}
$\begin{matrix} 2 \\ 2 \\ 1 \\ 1
\end{matrix} \quad ,$  & 
$\begin{matrix} 2 & 3 \\ 1 & 3 \\ 2 &  \\ 1 & 
\end{matrix} \quad ,$ & 
$\begin{matrix} 2 & 3 \\ 1 & 3 \\ 1 & 3 \\ 2 & 
\end{matrix} \quad ,$ & 
$\begin{matrix} 1 & 3 \\ 2 &  \\ 2 &  \\ 1 & 
\end{matrix} \quad ,$ & 
$\begin{matrix} 1 & 3 & 3 \\ 2 & 3 &  \\ 1 & 3 &  \\ 2 & & 
\end{matrix} \quad ,$ & 
$\begin{matrix} 1 &  3\\ 1 & 3 \\ 2 &  \\ 2 & 
\end{matrix}$
\end{tabular}
\end{center}

\phantom{.}

The $t$-\emph{complement} $\rho^c$ of $\rho$ is the vector $(t+1-\rho_1,\dots,t+1-\rho_k)$; since ascents in $\rho$ map to descents in $\rho^c$, the number of $t$ units depending from the residue vector is easily seen to be the major index of $\rho^c$.  It is well known (see for instance \cite{Stanley}) that $maj(\rho^c)$ is equidistributed with $maj(\rho)$ over all permutations $\rho$ of the same multiset, and that if $\rho$ contains $i_1$ ones, $i_2$ twos, $\dots$, and $i_{t-1}$ residues $t-1$, then the $q$-multinomial coefficient $$\left[ {{i_1 + \dots + i_{t-1}} \atop {i_1, \dots , i_{t-1}}} \right]_q := \frac{(q;q)_{i_1+\dots+i_{t-1}}}{(q;q)_{i_1}\dots(q;q)_{i_{t-1}}}$$

\noindent is the generating function for the major index over all multiset permutations of $\rho$, i.e. $$ \left[ {{i_1 + \dots + i_{t-1}} \atop {i_1, \dots , i_{t-1}}} \right]_q = \sum_{\sigma(\rho)} q^{maj(\sigma(\rho))}$$

\noindent where summation is over all multiset permutations of $\rho$.

Since the units are of size $t$, we find that the generating function for the $t$-regular, $t$-flat partitions with residue vector some permutation of $\rho$, which we may denote by $P_{R,F}^{(t,t;\rho)}(q)$, is given by \begin{theorem}$$P_{R,F}^{(t,t;\rho)}=q^{i_1 + \dots + (t-1) i_{t-1}} \left[ {{i_1 + \dots + i_{t-1}} \atop {i_1, \dots , i_{t-1}}} \right]_{q^t}.$$\end{theorem}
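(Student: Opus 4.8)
The plan is to realize the $t$-regular, $t$-flat partitions whose residue multiset is $\rho$ as the multiset permutations of $\rho$ by a weight-preserving bijection, and then to read off the generating function from a major-index computation. First I would make the correspondence precise: to such a partition $\lambda$ associate the word $\sigma=(\sigma_1,\dots,\sigma_k)$ of residues of $\lambda_1,\dots,\lambda_k$ modulo $t$, read top to bottom; conversely, to a multiset permutation $\sigma$ of $\rho$ associate the partition in which the residue $\sigma_i$ carries $u_i$ units of size $t$, where $u_k=0$ and, proceeding upward, $u_i=u_{i+1}+1$ if $\sigma_i<\sigma_{i+1}$ and $u_i=u_{i+1}$ otherwise. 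A short induction from the bottom up, using only that every residue lies strictly between $0$ and $t$, shows that the requirements ``consecutive parts decrease by less than $t$'' and ``smallest part less than $t$'' leave exactly one admissible value of $u_i$ at each step, namely the one given by this recursion; hence the two maps are mutually inverse. This is precisely the observation recorded in the paragraph preceding the theorem. Unwinding the recursion yields $u_i=\#\{\ell\geq i:\sigma_\ell<\sigma_{\ell+1}\}$, the number of ascents of $\sigma$ at positions $\geq i$.

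Next I would compute the weight of the partition $\lambda(\sigma)$ attached to a permutation $\sigma$. Its $i$-th part is $\sigma_i+tu_i$, so $|\lambda(\sigma)|=\sum_{i=1}^{k}\sigma_i+t\sum_{i=1}^{k}u_i$. The first sum is $\sum_{j=1}^{t-1}j\,i_j=i_1+\dots+(t-1)i_{t-1}$, independent of the particular permutation $\sigma$. For the second, interchanging the order of summation turns $\sum_i u_i$ into $\sum_{\ell:\,\sigma_\ell<\sigma_{\ell+1}}\ell$, the sum of the ascent positions of $\sigma$. Passing to the $t$-complement $\sigma^c=(t+1-\sigma_1,\dots,t+1-\sigma_k)$ converts every ascent of $\sigma$ into a descent of $\sigma^c$ at the same position, so this sum equals $maj(\sigma^c)$. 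Thus $|\lambda(\sigma)|=\big(i_1+\dots+(t-1)i_{t-1}\big)+t\cdot maj(\sigma^c)$.

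Finally I would sum over all multiset permutations $\sigma$ of $\rho$, obtaining
$$P_{R,F}^{(t,t;\rho)}(q)=q^{\,i_1+\dots+(t-1)i_{t-1}}\sum_{\sigma}\big(q^{t}\big)^{maj(\sigma^c)}.$$
By the equidistribution of $maj(\sigma^c)$ with $maj(\sigma)$ over the permutations of a fixed multiset, together with MacMahon's evaluation of the resulting $maj$-generating function as the $q$-multinomial coefficient $\left[{{i_1+\dots+i_{t-1}}\atop{i_1,\dots,i_{t-1}}}\right]_{q}$ --- both recalled above from \cite{Stanley} --- the inner sum becomes $\left[{{i_1+\dots+i_{t-1}}\atop{i_1,\dots,i_{t-1}}}\right]_{q^{t}}$ upon the substitution $q\mapsto q^{t}$. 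Restoring the prefactor $q^{\,i_1+\dots+(t-1)i_{t-1}}$ gives the stated identity.

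The argument is mostly bookkeeping once the bijection is in place, and I do not expect a real obstacle. The one step deserving genuine care is the claim --- stated but not proved in the lead-up --- that the residue word together with the $t$-flat condition determines the partition, equivalently the formula $u_i=\#\{\ell\geq i:\sigma_\ell<\sigma_{\ell+1}\}$; I would nail this down by the explicit bottom-up induction sketched above, verifying at each step $i<k$ that the pair of inequalities $\lambda_i\geq\lambda_{i+1}$ and $\lambda_i-\lambda_{i+1}<t$ pins $u_i$ down uniquely, while $u_k$ is forced to equal $0$ by the requirement that the smallest part be less than $t$. With that settled, the conclusion follows at once from the symmetry of the $q$-multinomial in its lower arguments and the MacMahon equidistribution already invoked in the text.
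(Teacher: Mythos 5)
Your proposal is correct and follows essentially the same route as the paper: the paper's argument (given in the paragraphs preceding the theorem statement) likewise identifies the partitions with multiset permutations of $\rho$, observes that the number of $t$-units below $\rho_i$ is the number of ascents at positions $\geq i$ so that the total is $maj(\rho^c)$, and then invokes equidistribution of $maj$ with MacMahon's $q$-multinomial evaluation. Your write-up merely supplies details the paper leaves implicit, notably the bottom-up induction showing the residue word determines the partition and the interchange of summation giving $\sum_i u_i = maj(\rho^c)$.
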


Finally, we note that if our partitions are $s$-regular and $t$-flat with $s$ dividing $t$, a small variation of the previous argument suffices; we are restricted to a subset of the possible residues modulo $t$.  In the first form of the generating function, when constructing $P_{R}^{(j \text{ parts})}$, we additionally include-exclude parts with residues divisible by $s$, producing additional summations.  For instance, if $2s = t$, we have

$$P_R^{(j \text{ parts})} = \sum_{i,k} \frac{q^{j-i-k}}{(q;q)_{j-i-k}} (-1)^i q^{\binom{i+1}{2}t} (-1)^i q^{\binom{i}{2}t+ks} \frac{1}{(q^t;q^t)_i(q^t;q^t)_k}$$

and hence

$$P_{R,F}^{(j \text{ parts})} = \sum_{i,k} \frac{q^{j-i-k}}{(q;q)_{j-i-k}} (-1)^i q^{\binom{i+1}{2}t} (-1)^i q^{\binom{i}{2}t+ks} \frac{(q^t;q^t)_j}{(q^t;q^t)_i(q^t;q^t)_k}.$$

Other than restricting the permissible residue vectors $\rho$, the second form of the generating function is unchanged.

\subsection{Other observations}

Unlike the other two classes discussed in this paper, simple calculation shows us that $s$-regular, $t$-flat partitions are not symmetric in $s$ and $t$.  For instance, $(1,1,1)$ is 3-regular and 2-flat, and also 2-regular and 3-flat; $(2,1)$ is 3-regular and 2-flat, but not $2$-regular; and $(3)$ is in neither class.  Comparatively, it appears to be the case that the number of $s$-regular, $t$-flat partitions grows faster when $s < t$ than when $s > t$.  An extreme example is the $2$-regular, $t$-flat partitions, which are partitions into odd parts not differing by too much, whereas the $s$-regular, 2-flat partitions can only be partitions into consecutive parts up to size $s-1$.  The asymptotics of these partitions is unexplored, however.

Letting $P_{R,F}^{(s,t;k)} (q)$ be the generating function for $s$-regular, $t$-flat partitions with largest part at most $k$, we have that $$P_{R,F}^{(s,t;k)} (q) = P_{R,F}^{(s,t;k-1)}(q) + \frac{\chi(s \nmid k) q^k}{1-q^k}\left( P_{R,F}^{(s,t;k-1)} - P_{R,F}^{(s,t;k-t)}\right)$$

\noindent where $\chi(T)$ is the indicator function of the truth of statement $T$.

Not many of these generating functions are in the OEIS.  The 2-regular (i.e., partitions into odd parts), 3-flat partitions are partitions into odd parts with consecutive (among odds) sizes, starting with a minimum size of 1; these constitute the mock theta function $\psi(q)$, OEIS seuqence A053251. The 2-regular, 4-flat partitions are the same, except that a 1 need not appear (a 3 always will), and hence $p_{R,F}^{(2,3)}(n) = p_{R,F}^{(2,4)}(n-1)$ for $n>0$.  As mentioned earlier, the $s$-regular, 2-flat partitions are just the partitions into consecutive parts from 1 to $s-1$, such as OEIS sequence A014591.

\section{Distinct and flat}

For some reason we can say very little about partitions simultaneously distinct and flat; except in the most restricted cases, we do not even have a generating function written down for these partitions.  Such observations as can be made are collected below.

Recalling the definition of the Ferrers diagram of a partition, we see that partitions into parts $s$-distinct and $t$-flat can be described geometrically: they are the partitions in which the vertical segments of the outer boundary of the Ferrers diagram -- the \emph{profile} of the partition -- are of length less than $s$, and horizontal segments are of length less than $t$.

It is easy to see from this form that the generating function of the $s$-distinct, $t$-flat partitions is symmetric in $s$ and $t$: the $s$-distinct, $t$-flat partitions of $n$ are in bijection with the $t$-distinct, $s$-flat partitions of $n$ by conjugation.  One notes that the class of $t$-distinct, $t$-flat partitions is preserved, but not the partitions themselves; since the number of $t$-distinct, $t$-flat partitions of $n$ is not necessarily of the same parity as the number of partitions of $n$, it is too much to hope for an involution that has only the $t$-distinct, $t$-flat partitions as its fixed points, but one wonders if there is an involution which at least fixes all of these.

Despite the existence of this simple geometric description, it has been difficult to assert any general form of the generating function.  The $s$-distinct, 2-flat partitions are simply those in which all parts from 1 to some $k$ appear, but at most $s-1$ times.  Their generating function is $$P_{D,F}^{(s,2)} = \sum_{k=0}^\infty q^{\binom{k+1}{2}} \frac{(q^{s-1};q^{s-1})_k}{(q;q)_k}.$$  In particular, the 3-distinct, 2-flat partitions are counted by OEIS sequence A053261, the mock theta function $\psi_1(q)$.

More generally, one can write down various recurrences.  For instance, if $P_{D,F}^{(s,t;k)}(q)$ is the generating function for $s$-distinct, $t$-flat partitions in which the largest part is at most $k$, then $$P_{D,F}^{(s,t;k)}(q) = P_{D,F}^{(s,t;k-1)}(q) + \left( q^k \frac{1-q^{(s-1)k}}{1-q^k} \right) \left( P_{D,F}^{(s,t;k-1)}(q) - P_{D,F}^{(s,t;k-t)}(q) \right)$$ \noindent with appropriate initial conditions.  The standard techniques for solving generating functions, however, do not seem to solve this recurrence very well.

By taking $q \rightarrow 1$ in the previous recurrences, we obtain a solvable difference equation, which can tell us something about the number of such partitions with largest part at most $k$.  For instance, if $s = t = 3$, the simplesst case not covered by the generating function above, we are considering partitions in which parts differ by no more than 2 and repeat no more than twice.  Letting $f(k) = P_{D,F}^{(3,3;k)}(1)$, we find that we have the difference equation $$f(k) = 3 f(k-1) - 2f(k-3),$$

\noindent with initial conditions $f(0) = 1$, $f(1) = 3$, $f(2) = 9$, which yields OEIS sequence A077846, $(1,3,9,25,69,189,517,\dots)$.  At the OEIS entry we find the expression $f(n) = \sum_{i,j = 0}^n 2^j \binom{j}{i-j}$; this is sometimes suggestive of a form for the generating function for a combinatorial expression when one replaces $\binom{N}{M}$ by $\left[ {N \atop M} \right]_{q^k}$ for some useful $k$, but nothing obvious seems to work along these lines for this problem.

The \emph{hooklength} of a square in the Ferrers diagram, identified as position $(i,j)$ when the lower right-hand corner of the square is at $(x,y)$ coordinates $(-i,-j)$ where the upper left corner is the origin, is the sum of the number of squares directly right of and below the square at $(i,j)$, plus 1.  The hooklengths in the partition $(4,4,3,1,1,1)$ are illustrated below.

$$\tiny\young(9542,8431,621:,3:::,2:::,1:::)$$

A partition is $t$-\emph{core} if $t$ is not among its hooklengths.  The partition above is 7-core or $t$-core for $t>9$. Since a partition in which parts differ by $t$ or appear $t$ or more times would automatically have $t$ among the hooklengths in its outermost squares, the $t$-core partitions perforce form a subset of the $t$-distinct, $t$-flat partitions of $n$.  In the case of $t=2$, the sets are equal, as the partitions involved are just the triangular partitions $(n,n-1,\dots,2,1)$.  It might have been hoped that this observation would be useful in producing generating functions, but investigation along this line did not pan out.

\section{Further observations and questions}

Clearly since little can be said about $s$-distinct, $t$-flat partitions, less can possibly be said about partitions simultaneously $r$-regular, $s$-distinct, and $t$-flat.  Those that are 2-regular, 2-distinct and 3-flat are partitions consisting of consecutive odd numbers starting from 1, so their generating function is the Jacobi theta function $\sum_{n=0}^\infty q^{n^2}$.  Those that are 2-regular, 3-distinct and 3-flat permit an additional appearance of each odd part, and these are the 5-th order mock theta function $\phi_0(q)$, OEIS entry A053258.

Several interesting open questions can be posed:

\begin{enumerate}
\item The fact that mock theta functions arise in numerous contexts related to these partitions might be spurious, but after all, a mock theta function has coefficients that do not grow ``too fast,'' and the combination of flatness and another condition restricts partitions rather heavily; while it is perhaps a bit much to hope that the $s$-regular or $s$-distinct and $t$-flat partitions all qualify as mock theta functions, perhaps there is a closer connection here.
\item A full and careful proof of Tenner's conjecture on $\phi_s \phi_t$ for $s$ and $t$ not coprime should be interesting to produce.
\item What is the generating function for partitions with profile segments of length less than 2, that is, into parts appearing not more than twice, with parts differing by at most 2, including starting with 1 or 2?
\item It is easy to show based on Ramanujan's congruences that the number of 5-regular, 5-distinct partitions of $5n+4$ is divisible by 5.  Dyson's rank and the crank do not realize this congruence; is there another natural statistic on this subset which does so?
\end{enumerate}

For item 3, the set of partitions involved is of natural interest, the property is invariant under the most natural involution on partitions, and it has at least a potential relation to the much-studied 3-core partitions, and yet the simple question of writing down the generating function for the set seems to elude any of the basic techniques for doing so.  It would certainly be interesting to see this function written down, and more generally that for the $s$-distinct, $t$-flat partitions.
 
Item 4 is of interest regarding congruences for the partition function such as $p(5n+4) \equiv 0 \pmod{5}$.  One observes that if $p(An+B) \equiv 0 \pmod{C}$ for all $n$, it must also hold that the $p_{A,A}(An+B)$, the number of $A$-regular, $A$-distinct partitions of $An+B$, possesses this congruence, i.e. $p_{A,A}(An+B) \equiv 0 \pmod{C}$.  This follows since one may write a recurrence, perhaps a complicated one but still having integer coefficients, for $p_{A,A}(n)$ in terms of $p(n)$, $p(n-A)$, $p(n-2A)$, etc, and if the latter are all divisible by $C$, then $p_{A,A}(n)$ will be as well.  Since $p_{5,5}(5n+4)$ shares the congruence but the currently constructed statistics fail to realize the congruence, perhaps another statistic exists that does so -- and perhaps, due to the set being considered, is somewhat more natural and susceptible to simpler proof of its properties than the rank and crank.  A really elementary combinatorial proof of Ramanujan's congruences does not yet exist in the literature.

There are certainly many other questions to be explored with these partitions; it is somewhat surprising that they have escaped serious notice for so long, and it is hoped that this paper will spur some interest in this area.

\subsection{Acknowledgements}

The author warmly thanks Melvyn Nathanson and all other organizers and staffers of CANT 2016 for the opportunity to speak and the production of this proceedings volume, and for the lively and interesting discussions and problem sessions which surround the presentations at the conference.

\end{document}